\newtheorem*{corollary*}{Corollary}
\newtheorem{theorem}{Theorem}[section]
\newtheorem*{theorem*}{Theorem}
\newtheorem{corollary}[theorem]{Corollary}
\newtheorem{lemma}[theorem]{Lemma}
\newtheorem{proposition}[theorem]{Proposition}
\newtheorem*{claim*}{Claim}
\theoremstyle{definition}
\newtheorem*{theorem }{Theorem}
\newtheorem{example}[theorem]{Example}
\theoremstyle{remark}
\numberwithin{equation}{theorem}
\renewcommand*\env@matrix[1][\
arraystretch]{%
  \edef\arraystretch{#1}%
  \hskip -\arraycolsep
  \let\@ifnextchar\new@ifnextchar
  \array{*\c@MaxMatrixCols c}}
\renewcommand{\mod}{\operatorname{mod}}
\newcommand{\Ext}{\operatorname{Ext}}
\newcommand{\Tr}{\operatorname{Tr}}
\newcommand{\CM}{\operatorname{CM}}
\newcommand{\Ref}{\operatorname{Ref}}
\newcommand{\depth}{\operatorname{depth}}
\newcommand{\Per}{\operatorname{Per}}
\newcommand{\Coref}{\operatorname{Coref}}
\newcommand{\Hom}{\operatorname{Hom}}
\newcommand{\Dom}{\operatorname{Dom}}
\newcommand{\ev}{\operatorname{ev}}
\newcommand{\add}{\operatorname{\mathrm{add}}}
\renewcommand{\mod}{\operatorname{mod}}
\begin{document}

\title{On modules $M$ with $\tau(M) \cong \nu \Omega^{d+2}(M)$ for isolated singularities of Krull dimension $d$}
\date{\today}

\subjclass[2010]{Primary 16G10, 16E10}

\keywords{Artin algebra, isolated singularity, reflexive modules, Auslander-Reiten translation}

\author{Ren\'{e} Marczinzik}
\address{Institute of algebra and number theory, University of Stuttgart, Pfaffenwaldring 57, 70569 Stuttgart, Germany}
\email{marczire@mathematik.uni-stuttgart.de}

\begin{abstract}
A classical formula for the Auslander-Reiten translate $\tau$ says that $\tau(M)\cong \nu \Omega^2(M)$ for every indecomposable module $M$ of a selfinjective Artin algebra. We generalise this by showing that for a $2d$-periodic isolated singularity $A$ of Krull dimension $d$, we have for the Auslander-Reiten translate of an indecomposable non-projective Cohen-Macaulay $A$-module $M$, $\tau(M)\cong \nu \Omega^{d+2}(M)$ if and only if $\Ext_A^{d+1}(M,A)=\Ext_A^{d+2}(M,A)=0$.
We give several applications for Artin algebras.
\end{abstract}

\maketitle

\section*{Introduction}

A classical result in Auslander-Reiten theory for Artin algebras is that for every indecomposable non-projective module $X$ over a selfinjective Artin algebra one has $\tau(X) \cong \nu \Omega^2(X)$, see for example proposition 3.7. in \cite{ARS}.
This isomorphism has several important applications for the Auslander-Reiten theory of selfinjective algebras, we refer for example to \cite{DP} and \cite{KZ}.

In section 2 of \cite{RZ} it was recently noted that for a general Artin algebra $A$, every indecomposable non-projective module $X$ satisfies $\tau(X) \cong \nu \Omega^2(X)$ in case $\Ext_A^1(X,A)=\Ext_A^2(X,A)=0$. As a main result of this article, we sharpen their result by showing that also the converse is true.
\begin{theorem*} 
Let $A$ be an Artin algebra with an indecomposable non-projective module $M$.
Then the following are equivalent:
\begin{enumerate}
\item $\Ext_A^1(X,A)=\Ext_A^2(X,A)=0$.
\item $\Tr(X)$ is reflexive.

\item $\tau(X) \cong \nu \Omega^2(X)$.
\end{enumerate}

\end{theorem*}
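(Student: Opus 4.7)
The plan is to organise the entire argument around the four-term exact sequence
\begin{equation*}
0 \to D\Ext_A^2(X,A) \to \nu\Omega^2(X) \to \tau(X) \to D\Ext_A^1(X,A) \to 0, \qquad (\ast)
\end{equation*}
obtained by applying the duality $D$ to the classical Auslander-Bridger sequence of $\Tr(X)$, combined with the identification $(\Tr X)^{*} \cong \Omega^2(X)$ that one gets by applying $\Hom_A(-,A)$ to the defining presentation $P_0^{*} \to P_1^{*} \to \Tr(X) \to 0$ and using that finitely generated projectives are reflexive. From $(\ast)$ the implication (1) $\Rightarrow$ (3) is immediate: when both $\Ext$-groups vanish, the middle arrow becomes an isomorphism. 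This reproduces the observation of [RZ].

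For (1) $\iff$ (2) I would invoke the classical Auslander-Bridger reflexivity criterion, which says that a finitely generated module $Y$ is reflexive precisely when $\Ext_A^1(\Tr Y,A) = \Ext_A^2(\Tr Y,A) = 0$. Applying this to $Y = \Tr(X)$ and using the stable isomorphism $\Tr\Tr(X) \cong X$, under which $\Ext_A^i(-,A)$ is preserved for $i\geq 1$ since projectives are $\Ext$-acyclic, reformulates (2) as exactly (1).

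The substantial new content is (3) $\Rightarrow$ (1), and my strategy leans on the indecomposability of $\tau(X)$. Since $X$ is indecomposable non-projective, $\tau(X)$ is indecomposable, so under the hypothesis $\nu\Omega^2(X)$ is also indecomposable and has local endomorphism ring. Composing the canonical middle map $f\colon \nu\Omega^2(X) \to \tau(X)$ of $(\ast)$ with any abstract isomorphism $\phi\colon \tau(X) \to \nu\Omega^2(X)$ produces $g := \phi\circ f \in \End_A(\nu\Omega^2(X))$, which by locality must be either an isomorphism or an element of the Jacobson radical. In the first case $f$ itself is an isomorphism, so $\ker f = D\Ext_A^2(X,A) = 0$ and $\operatorname{coker} f = D\Ext_A^1(X,A) = 0$, giving (1).

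The main obstacle will be to exclude the alternative that $g$ lies in $\rad\End_A(\nu\Omega^2(X))$, i.e.\ is nilpotent. The obvious length computation from $(\ast)$ only yields $\ell(\Ext_A^1(X,A)) = \ell(\Ext_A^2(X,A))$, which on its own is not enough. I expect the contradiction to be extracted by iterating: if $g^n = 0$, the strictly descending chain $\operatorname{im}(g) \supsetneq \operatorname{im}(g^2)\supsetneq\cdots\supsetneq 0$ inside the indecomposable module $\nu\Omega^2(X)$ must terminate, while at each step the canonical identifications $\ker f \cong D\Ext_A^2(X,A)$ and $\operatorname{coker} f \cong D\Ext_A^1(X,A)$ force a rigid internal structure on the images, which should be incompatible with the nilpotency unless both $\Ext$-groups vanish. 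Closing this last step cleanly is where I expect the real work to lie.
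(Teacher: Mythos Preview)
Your handling of (1)$\iff$(2) and (1)$\Rightarrow$(3) via the four-term sequence $(\ast)$ is correct and agrees with the paper. The genuine gap is precisely where you place it: in (3)$\Rightarrow$(1) you have not excluded the possibility that the canonical map $f\colon \nu\Omega^2(X)\to\tau(X)$ in $(\ast)$ is a non-isomorphism while some abstract isomorphism $\phi$ exists. Your proposed contradiction from the descending chain $\operatorname{im}(g)\supsetneq\operatorname{im}(g^2)\supsetneq\cdots$ does not work as stated: such a chain always terminates in a finite-length module, and the ``rigid internal structure'' you allude to is not made precise enough to force $\Ext^1=\Ext^2=0$. The length identity $\ell(\Ext^1)=\ell(\Ext^2)$ is, as you already note, insufficient.

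The paper avoids the canonical map $f$ entirely. Applying $D$ to the hypothesis (3) and using $(\Omega^2 X)^{*}\cong\Omega^2\Tr\Omega^2(X)\cong(\Tr X)^{**}$ (this is your own identification $(\Tr X)^{*}\cong\Omega^2(X)$ applied twice), one rewrites (3) as the \emph{abstract} isomorphism $\Tr(X)\cong(\Tr X)^{**}$. The decisive lemma is then: over a noetherian ring, $M\cong M^{**}$ already forces $M$ to be reflexive. Indeed, any dual has split-mono evaluation map, so $M^{**}\cong M\oplus N$; combined with $M\cong M^{**}$ this gives $M\cong M\oplus N$, and if $N\neq 0$ one obtains a strictly ascending chain $N<N^2<N^3<\cdots$ inside $M$, contradicting noetherianity. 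Thus (3)$\Rightarrow$(2) directly, without ever analysing whether the particular map in $(\ast)$ is an isomorphism. This is the step your proposal is missing.
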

We actually prove a much more general result for isolated singularities and obtain the previous theorem as a special case by setting $d=0$. We refer to the next section for the relevant definitions.
\begin{theorem*} 
Let $A$ be a $2d-periodic$ isolated singularity of Krull dimension $d$ and $X \in \CM(A)$ indecomposable non-projective.
Then the following are equivalent:
\begin{enumerate}
\item $\Ext_A^{d+1}(X,A)=\Ext_A^{d+2}(X,A)=0$.
\item $\Tr_d(X)$ is reflexive.

\item $\tau(X) \cong \nu \Omega^{d+2}(X)$.
\end{enumerate}

\end{theorem*}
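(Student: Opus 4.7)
The plan is to reduce everything to the classical Auslander-Bridger four-term exact sequence and then to invoke the Auslander-Reiten formula for Cohen-Macaulay modules over the isolated singularity $A$.

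First, I would apply the classical Auslander-Bridger sequence
$$0 \to \Ext^1_A(M, A) \to \Tr(M) \to \Tr(M)^{**} \to \Ext^2_A(M, A) \to 0$$
to $M = \Omega^d(X)$. Since $X$ is indecomposable non-projective, its iterated syzygies carry no projective summand, so $\Tr_d(X) \cong \Tr(\Omega^d X)$ (up to projective summands) and $\Ext^i_A(\Omega^d X, A) \cong \Ext^{i+d}_A(X, A)$ by dimension shifting. This yields
$$0 \to \Ext^{d+1}_A(X, A) \to \Tr_d(X) \to \Tr_d(X)^{**} \to \Ext^{d+2}_A(X, A) \to 0,$$
from which $(1) \Leftrightarrow (2)$ is immediate: the canonical biduality map is an isomorphism if and only if both its kernel $\Ext^{d+1}_A(X, A)$ and its cokernel $\Ext^{d+2}_A(X, A)$ vanish.

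For $(2) \Leftrightarrow (3)$, I would apply the duality $D$ to the sequence above, obtaining
$$0 \to D\Ext^{d+2}_A(X, A) \to D(\Tr_d(X)^{**}) \to D\Tr_d(X) \to D\Ext^{d+1}_A(X, A) \to 0,$$
and then identify the two middle terms. For the left one, the classical formula $(\Tr M)^* \cong \Omega^2(M)$ modulo projectives, applied to $M = \Omega^d(X)$, gives $\Tr_d(X)^* \cong \Omega^{d+2}(X)$, so $D(\Tr_d(X)^{**}) \cong D\Hom_A(\Omega^{d+2}(X), A) = \nu\Omega^{d+2}(X)$ by definition of the Nakayama functor. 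For the right one, one invokes the Auslander-Reiten formula $\tau(X) \cong D\Tr_d(X)$ for $X \in \CM(A)$, valid for a $2d$-periodic isolated singularity of Krull dimension $d$; this extends the classical Artin-algebra formula $\tau = D\Tr$ to the higher-dimensional setting. Together these identifications produce the four-term sequence
$$0 \to D\Ext^{d+2}_A(X, A) \to \nu\Omega^{d+2}(X) \to \tau(X) \to D\Ext^{d+1}_A(X, A) \to 0,$$
from which $(1) \Rightarrow (3)$ is immediate. For the converse $(3) \Rightarrow (1)$, one argues via the local endomorphism structure of the indecomposable $\tau(X)$ together with a length argument that any abstract isomorphism $\tau(X) \cong \nu\Omega^{d+2}(X)$ must force the natural map in the sequence to be an isomorphism, hence both boundary Ext groups to vanish.

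The main obstacle is the identification $\tau(X) \cong D\Tr_d(X)$ for $X \in \CM(A)$. In the Artin case ($d = 0$) this is simply the definition of $\tau$; in positive Krull dimension it requires the refined Auslander-Reiten theory for Cohen-Macaulay modules, and the $2d$-periodicity hypothesis on $A$ is precisely what allows this formula to hold without additional shift-correction terms. A secondary subtle point is the converse direction $(3) \Rightarrow (1)$, where one must carefully leverage indecomposability to upgrade an abstract isomorphism into the canonical biduality isomorphism.
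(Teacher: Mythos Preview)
Your argument has two genuine gaps. First, the identification $\Tr_d(X)\cong \Tr(\Omega^d X)$ is not a triviality about projective summands: by definition $\Tr_d=\Omega^d\Tr$, so what you are silently using is the commutation $\Omega^d\Tr\cong\Tr\Omega^d$ in the stable category. This is exactly the step that requires the $2d$-periodicity hypothesis (one applies $\Omega^d\Tr$ to $\Omega^{2d}(X)\cong X$ and uses $(\Omega^d\Tr)^2\cong\mathrm{id}$). You have mislocated the role of $2d$-periodicity: the formula $\tau(X)\cong D_d\Tr_d(X)$ is the \emph{definition} of $\tau$ for isolated singularities in this setting and needs no hypothesis, while the commutation $\Tr\Omega^d\cong\Omega^d\Tr$ is where periodicity actually enters. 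Without this lemma your four-term sequence involves $\Tr(\Omega^d X)$, not $\Tr_d(X)$, and the equivalence $(1)\Leftrightarrow(2)$ does not follow.

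Second, the direction $(3)\Rightarrow(1)$ is not settled by a ``length argument'': from an exact sequence $0\to K\to Y\to Z\to C\to 0$ together with an abstract isomorphism $Y\cong Z$, one cannot conclude $K=C=0$ (already over Artin algebras there are counterexamples, and in positive Krull dimension the modules need not have finite length at all). The paper instead reduces $(3)$ to the statement $\Tr_d(X)\cong \Tr_d(X)^{**}$ by manipulating in the stable category using $(-)^*\cong\Omega^2\Tr$ and the commutation lemma above, and then invokes the nontrivial fact that for a noetherian ring any abstract isomorphism $M\cong M^{**}$ forces $M$ to be reflexive (since the evaluation map is always split mono on a dual, and a proper direct summand isomorphic to the whole would produce an infinite ascending chain). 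You would need this step, or an equivalent one, to close the argument.
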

In the final section we give several applications of our main result for Artin algebras.
I thank Jeremy Rickard for allowing me to use his proof in \ref{reflexivecharacterisation}.

\section{A characterisation of $\tau(M) \cong \nu \Omega^{d+2}(M)$}
We assume that $A$ is always a (not necessarily commutative) noetherian ring and modules are finitely generated right modules unless otherwise stated.
We write $(-)^{*}$ short for the functor $Hom_A(-,A)$.
Recall that a module $X$ is called \emph{torsionless} in case the natural evaluation map $\ev_M :M \rightarrow M^{**}$ is injective and it is called \emph{reflexive} in case $\ev_M$ is an isomorphism. 
For a general full subcategory $\mathcal{C}$ of $\mod-A$, we denote by $\underline{\mathcal{C}}$ the subcategory $\mathcal{C}$ modulo projective objects (called the stable category of $\mathcal{C}$) and by $\overline{\mathcal{C}}$ we denote the subcategory $\mathcal{C}$ modulo injective objects (called the costable category of $\mathcal{C}$).
We now define the \emph{Auslander-Bridger transpose} $\Tr(X)$ of a module $X$:
Let 
$$P_1 \xrightarrow{g} P_0 \rightarrow X \rightarrow 0$$
be a projective presentation of $X$. Then define $\Tr(X)$ as the cokernel of the map $g^{*}$.
The following theorem is classical:
\begin{theorem} \label{ABtheorem}
Let $A$ be a noetherian ring.
Then $\Tr$ induces a duality $\underline{mod}-A \rightarrow \underline{mod}-A^{op}$ with $\Tr \Tr \cong id_{\underline{mod}-A}$.
\end{theorem}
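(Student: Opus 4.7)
The plan is to verify three things: that $\Tr$ descends to a contravariant functor on the stable category, that $\Tr\Tr$ is naturally isomorphic to the identity there, and that together these yield the claimed duality. Starting from a projective presentation $P_1 \xrightarrow{g} P_0 \to X \to 0$, applying $(-)^{*}$ produces the exact sequence
\[
0 \to X^{*} \to P_0^{*} \xrightarrow{g^{*}} P_1^{*} \to \Tr(X) \to 0,
\]
and the right-hand tail is by definition a projective presentation of $\Tr(X)$ as a right $A^{\mathrm{op}}$-module, since each $P_i^{*}$ is finitely generated projective.

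To descend $\Tr$ to $\underline{\mod}-A \to \underline{\mod}-A^{\mathrm{op}}$, I would run a Schanuel-type comparison argument: any two projective presentations of $X$ become isomorphic after adding complementary projective summands, via chain homotopy equivalences, and $(-)^{*}$ is additive and carries chain homotopies to chain homotopies. Hence the two candidate transposes differ only by projective direct summands and agree in $\underline{\mod}-A^{\mathrm{op}}$. Functoriality is analogous: a morphism $f : X \to Y$ lifts to a chain map between chosen projective presentations, unique up to chain homotopy, and applying $(-)^{*}$ yields a map $\Tr(Y) \to \Tr(X)$ well-defined modulo morphisms factoring through projectives. This equips $\underline{\mod}-A \to \underline{\mod}-A^{\mathrm{op}}$ with a contravariant additive functor $\Tr$.

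For $\Tr\Tr(X) \cong X$, I would apply $(-)^{*}$ to the projective presentation $P_0^{*} \xrightarrow{g^{*}} P_1^{*} \to \Tr(X) \to 0$, obtaining $P_1^{**} \xrightarrow{g^{**}} P_0^{**}$, whose cokernel is $\Tr\Tr(X)$. Since the evaluation map $\ev_P : P \to P^{**}$ is an isomorphism for every finitely generated projective $P$, naturality of $\ev$ produces a commutative square between $g$ on top and $g^{**}$ on the bottom, with vertical isomorphisms $\ev_{P_1}$ and $\ev_{P_0}$. Passing to cokernels identifies $\Tr\Tr(X) \cong X$, and naturality of $\ev$ promotes this to a natural isomorphism of functors on $\underline{\mod}-A$.

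The main subtlety throughout is the careful bookkeeping: $\Tr$ is not well-defined on $\mod-A$ itself, only modulo projectives on both objects and morphisms, so one must confirm at every step that choices of projective presentation and of lift affect the output only by projective summands or by morphisms factoring through projectives. The decisive algebraic input that makes the duality come out so cleanly is the reflexivity of finitely generated projective modules, which is exactly what ensures that $(-)^{**}$ returns us to the original projective presentation up to a canonical isomorphism.
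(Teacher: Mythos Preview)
Your argument is correct and is essentially the standard proof of this classical result. The paper, however, does not reprove it: its entire proof consists of the citation ``See proposition 2.6.\ in \cite{AB}.'' So there is nothing to compare at the level of ideas; you have supplied the argument that Auslander and Bridger give, namely that $(-)^{*}$ carries a projective presentation of $X$ to one of $\Tr(X)$, that Schanuel-type comparison makes the construction independent of choices modulo projectives, and that reflexivity of finitely generated projectives together with naturality of the evaluation map forces $\Tr\Tr(X)\cong X$. Your remark that the delicate point is the bookkeeping of projective summands and morphisms factoring through projectives is exactly right, and your sketch handles it adequately via chain homotopy.
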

\begin{proof}
See proposition 2.6. in \cite{AB}.
\end{proof}

We will need the following equivalent characterisations of reflexive modules. The proof that (2) implies (1) is due to Jeremy Rickard.
\begin{proposition} \label{reflexivecharacterisation}
Let $A$ be a noetherian algebra and $M$ an $A$-module.
The following are equivalent:
\begin{enumerate}
\item $M$ is reflexive.
\item $M \cong M^{**}$.
\item $\Ext_A^1(\Tr(M),A) = \Ext_A^2(\Tr(M),A)=0$.

\end{enumerate}

\end{proposition}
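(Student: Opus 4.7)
The plan is to establish $(1)\Leftrightarrow (3)$ by the classical Auslander--Bridger calculation and then to handle $(2)\Rightarrow (1)$ by a Krull--Schmidt cancellation argument (this latter part is Rickard's contribution); the implication $(1)\Rightarrow (2)$ is immediate from the definition of reflexivity.

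For $(1)\Leftrightarrow (3)$ I start from a projective presentation $P_{1}\xrightarrow{g}P_{0}\to M\to 0$ and dualise to obtain the four-term exact sequence $0\to M^{*}\to P_{0}^{*}\to P_{1}^{*}\to \Tr(M)\to 0$. Splitting this at $K:=\operatorname{Im}(g^{*})$ into two short exact sequences and applying $(-)^{*}$ to each, the resulting long exact $\Ext$-sequences together with $\Ext^{i}_{A}(P_{j}^{*},A)=0$ for $i\geq 1$ yield
\[
\Ext^{1}_{A}(\Tr(M),A)\cong \operatorname{coker}(P_{1}^{**}\to K^{*}),\qquad \Ext^{2}_{A}(\Tr(M),A)\cong \Ext^{1}_{A}(K,A)\cong \operatorname{coker}(P_{0}^{**}\to M^{**}).
\]
Using the canonical isomorphisms $P_{i}\cong P_{i}^{**}$ and naturality of $\ev$, the map $P_{0}^{**}\to M^{**}$ is identified with $\ev_{M}\circ p$, where $p\colon P_{0}\to M$ is the given surjection, so its cokernel is $\operatorname{coker}(\ev_{M})$; a parallel inspection identifies $K^{*}$ with $p^{-1}(\ker \ev_{M})\subseteq P_{0}$ and then $\operatorname{coker}(P_{1}^{**}\to K^{*})$ with $\ker(\ev_{M})$. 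Hence $\ev_{M}$ is an isomorphism iff both Ext groups vanish.

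The interesting step is $(2)\Rightarrow (1)$. The key formal input is the triangle identity $(\ev_{M})^{*}\circ \ev_{M^{*}}=\mathrm{id}_{M^{*}}$, valid for any module, which shows $\ev_{M^{*}}$ is always split monic. Applying this identity to $M^{*}$ in place of $M$ gives that $\ev_{M^{**}}\colon M^{**}\to M^{****}$ is also split monic, hence $M^{****}\cong M^{**}\oplus Y$ for some $Y$. Given an abstract isomorphism $\phi\colon M\xrightarrow{\sim} M^{**}$, applying $(-)^{**}$ yields $M^{**}\cong M^{****}\cong M^{**}\oplus Y$, and Krull--Schmidt for finitely generated modules over a noetherian algebra forces $Y=0$. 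Thus $\ev_{M^{**}}$ is an isomorphism, and naturality of $\ev$ gives $\phi^{**}\circ \ev_{M}=\ev_{M^{**}}\circ \phi$; since $\phi$, $\phi^{**}$ and $\ev_{M^{**}}$ are all isomorphisms, so is $\ev_{M}$.

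The main obstacle is precisely the subtlety in $(2)\Rightarrow (1)$: an abstract isomorphism $M\cong M^{**}$ need not coincide with the canonical map $\ev_{M}$, so the information has to be pushed to $\ev_{M^{**}}$, where a direct-sum cancellation applies, and only then transported back by naturality. Everything else is a formal manipulation of the long exact sequences attached to a projective presentation, together with the universally valid split-injectivity of $\ev_{M^{*}}$.
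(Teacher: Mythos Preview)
Your argument is correct in outline and follows the same idea as the paper for $(2)\Rightarrow(1)$: exploit the triangle identity to get a split monomorphism, then cancel. Two points of comparison. First, the paper is more direct: since $M\cong M^{**}$ exhibits $M$ itself as (isomorphic to) a dual, the triangle identity transported along this isomorphism already shows that $\ev_M$ is split monic, giving $M^{**}\cong M\oplus N$ and hence $M\cong M\oplus N$; your detour through $\ev_{M^{**}}$ and the naturality square is sound but unnecessary. Second, and more substantively, your appeal to ``Krull--Schmidt for finitely generated modules over a noetherian algebra'' is not justified in this generality --- Krull--Schmidt can fail over noetherian rings (e.g.\ Dedekind domains with nontrivial ideal class group). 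What you actually need, and what the paper invokes explicitly, is only that a noetherian module $M$ cannot satisfy $M\cong M\oplus N$ with $N\neq 0$: iterating such an isomorphism yields a strictly ascending chain $N<N^{2}<N^{3}<\cdots$ of submodules of $M$. With this correction your cancellation step goes through. For $(1)\Leftrightarrow(3)$ the paper simply cites Auslander--Bridger; your explicit long-exact-sequence computation is the standard derivation of that fact.
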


\begin{proof}
The equivalence between (1) and (3) is well known, see formula (0.1) in the introduction of \cite{AB}.
(1) implies (2) by definition. We now show that (2) implies (1):
Assume $M \cong M^{**}$. Then $M$ is a dual and for any dual the canonical evaluation map $M \rightarrow M^{**}$ is a split monomorphism, see for example Proposition 1.1.9. of \cite{C} (note that the book \cite{C} is about commutative rings but the same proof applies to non-commutative rings).
Thus $M$ is a direct summand of $M^{**}=M$ and therefore $M \cong M \oplus N$ for some non-zero module $N$ in case $M$ is not reflexive. We show that this leads to a contradiction and thus $M$ has to be reflexive.
In case $N \neq 0$, there is a strictly increasing chain of submodules of $M$ as follows:
$N < N^2 < N^3 <....$, which contradicts that $M$ is finitely generated and thus noetherian.

\end{proof}

We now give the relevant definitions on isolated singularities, where we closely follow section 3 of \cite{Iya}. We refer to \cite{Iya} for more information and examples.
 Let $R$ be a commutative noetherian complete local ring of Krull dimension $d$ and assume furthermore that $R$ is a Gorenstein ring.
Recall that an $R$-module $X$ is called \emph{Cohen-Macaulay} in case $X=0$ or $\dim(X)=d=\depth(X)$.
 
We always assume that an $R$-algebra $A$ is module-finite and semiperfect so that projective covers exist. We define the full subcategory $\CM(A):= \{ X \in \mod-A | X $ is Cohen-Macaulay as an $R$-module $\}$.
We call $A$ an \emph{$R$-order} in case $A \in \CM(A)$.
We call an $R$-order $A$ an \emph{isolated singularity} in case the Hom-spaces modulo projectives $\underline{\Hom}(X,Y)$ have finite length as $R$-modules for all $X, Y \in \CM(A)$.
For example for $d=0$ the isolated singularities are exactly the Artin algebras.
We assume in the following that $A$ is always an isolated singularity of Krull dimension $d$.
We have a duality $D_d := Hom_R(-,R): \CM(A) \rightarrow \CM(A^{op})$ and use this to define the Nakayama functors as $\nu := D_d (-)^*$ and $\nu^{-1}:= (-)^* D_d$.
The following theorem motivates the definition of the Auslander-Reiten translates for isolated singularities:
\begin{theorem} \label{isosingduality}
Let $A$ be an isolated singularity of Krull dimension $d$.
Then there is a duality $\Omega^d \Tr: \underline{\CM}(A) \rightarrow \underline{\CM}(A^{op})$ with $(\Omega^d \Tr)^2 \cong id_{\underline{\CM}(A)}$.

\end{theorem}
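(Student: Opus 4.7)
The plan is to deduce Theorem \ref{isosingduality} from Theorem \ref{ABtheorem}, which already supplies a duality $\Tr : \underline{\mod}(A) \to \underline{\mod}(A^{op})$ with $\Tr^2 \cong \mathrm{id}$. What must be added is (i) that $\Omega^d \Tr$ takes $\underline{\CM}(A)$ into $\underline{\CM}(A^{op})$, and (ii) that the square is still the identity after restriction. The key structural fact in the background is that, for a Gorenstein $R$-order $A$, the subcategory $\CM(A) \subseteq \mod(A)$ is a Frobenius exact category whose projective-injective objects coincide with the projective $A$-modules; in particular $\Omega$ descends to an auto-equivalence of $\underline{\CM}(A)$ with quasi-inverse $\Omega^{-1}$.

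For (i) I would argue as follows. Given $X \in \CM(A)$, take a projective resolution $\cdots \to P_1 \to P_0 \to \Tr(X) \to 0$ in $\mod(A^{op})$. Each $P_i$ is Cohen-Macaulay of $R$-depth $d$, because $(-)^*$ sends projective $A$-modules to projective $A^{op}$-modules and these lie in $\CM(A^{op})$ by the order hypothesis. Iteratively applying the depth lemma to the short exact sequences $0 \to \Omega^{i+1}\Tr(X) \to P_i \to \Omega^i \Tr(X) \to 0$ yields $\depth_R \Omega^d\Tr(X) \geq d$, so $\Omega^d\Tr(X) \in \CM(A^{op})$. Standard arguments show the choice of resolution is immaterial up to projective summands, so $\Omega^d \Tr$ is well-defined at the stable level and, since $\Tr$ is contravariant and $\Omega$ covariant, contravariant.

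For (ii) the key ingredient is the commutation $\Tr\Omega \cong \Omega^{-1}\Tr$ inside the stable CM categories, which I would establish by inspecting the dualised resolution. For $Y \in \CM(A^{op})$ with resolution $\cdots \to P_1 \to P_0 \to Y \to 0$, the Gorenstein hypothesis on $A$ gives $\Ext^i_{A^{op}}(Y,A)=0$ for $i>0$, so the tail $P_1^* \to P_2^* \to P_3^* \to \cdots$ is acyclic. One then identifies $\Tr(\Omega Y) = \mathrm{coker}(P_1^* \to P_2^*)$ with $\mathrm{im}(P_2^* \to P_3^*)$, which is exactly $\Omega^{-1}\Tr(Y) = \mathrm{coker}(\Tr(Y) \hookrightarrow P_2^*)$ in $\underline{\CM}(A^{op})$, using that $P_2^*$ is projective-injective there. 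Combining with $\Tr^2 \cong \mathrm{id}$ from Theorem \ref{ABtheorem},
\begin{align*}
(\Omega^d\Tr)^2 X \;\cong\; \Omega^d (\Tr \Omega^d) \Tr X \;\cong\; \Omega^d \Omega^{-d} \Tr^2 X \;\cong\; X
\end{align*}
in $\underline{\CM}(A)$, and the same argument with $A$ and $A^{op}$ swapped finishes the duality.

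The main obstacle is precisely the commutation $\Tr\Omega \cong \Omega^{-1}\Tr$, since $\Omega^{-1}$ only makes sense after passing to the Frobenius category $\CM$; the identification relies essentially on the vanishing $\Ext^i_A(Y,A)=0$ for $i>0$ on CM modules, which in turn uses that $A$ is a Gorenstein $R$-order (a standing assumption implicit in Iyama's isolated singularity setup in \cite{Iya}).
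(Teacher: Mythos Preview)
The paper does not prove this theorem at all; it merely cites \cite{Iya}, Theorem~3.4. So there is nothing in the paper to compare your sketch against, and you are attempting considerably more than the author does.

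Your depth-lemma argument for (i) is correct and standard. The problem is entirely in (ii). Both structural ingredients you invoke there hold only when $A$ itself is a \emph{Gorenstein} order, which is not among the paper's hypotheses (only the base ring $R$ is assumed Gorenstein):
\begin{itemize}
\item That $\CM(A)$ is Frobenius with projective-injectives the projective $A$-modules, so that $\Omega$ becomes an autoequivalence of $\underline{\CM}(A)$ with quasi-inverse $\Omega^{-1}$. In general the injective objects of the exact category $\CM(A)$ are the summands of $D_d(A)$, not of $A$; these coincide exactly when $A$ is a Gorenstein order. Without this, the symbol $\Omega^{-1}\Tr$ in your commutation formula is not even defined.
\item The vanishing $\Ext^i_{A^{op}}(Y,A)=0$ for all $i>0$ and $Y\in\CM(A^{op})$, which you need to make the dualised resolution acyclic. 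This is again a Gorenstein condition on $A$, not a consequence of $R$ being Gorenstein.
\end{itemize}
Your closing remark that Gorensteinness of $A$ is ``a standing assumption implicit in Iyama's isolated singularity setup'' is not accurate: Iyama's Theorem~3.4 treats arbitrary isolated singularities over a Gorenstein base, with no Gorenstein hypothesis on $A$. So your argument establishes the theorem only in the Gorenstein case---where it is in fact easier, since then $\Tr$ already restricts to a duality on $\underline{\CM}$ and the $\Omega^d$ is inessential. The general argument, going back to Auslander, does not use a Frobenius structure on $\CM(A)$; it proceeds instead through the Auslander--Bridger theory of $n$-torsionfree modules and the approximation sequences relating $X$, $X^{**}$, and $\Ext^i(\Tr X,A)$, identifying $\Omega^d\Tr\Omega^d\Tr X$ with $X$ directly.
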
 
\begin{proof}
See for example \cite{Iya}, theorem 3.4.

\end{proof}

We set $\Tr_d := \Omega^d \Tr$ and define the \emph{Auslander-Reiten translate} $\tau$ of an isolated singularity of Krull dimension $d$ as $\tau := D_d \Tr_d$ and the inverse Auslander-Reiten translate $\tau^{-1}$ as $\tau^{-1}:= \Tr_d D_d$.
In this text we are interested in a homological characterisation for indecomposable non-projective modules $M \in \CM(A)$ with $\tau(M) \cong \nu \Omega^{d+2}(M)$ for  isolated singularities.

\begin{lemma} \label{Nakfunctor}
Let $A$ be an isolated singularity.
Let $X$ be an indecomposable non-projective $A$-module.
Then $X^{*} \cong  \Omega^2 Tr(X)$.

\end{lemma}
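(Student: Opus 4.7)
The plan is to simply unwind the definition of $\Tr(X)$ and read off syzygies by applying $(-)^{*}$ to a projective presentation of $X$. The content of the lemma is essentially that $\Tr$, defined via a two-term ``dualized'' resolution, sits in the same exact sequence that realizes $X^{*}$ as a second syzygy of $\Tr(X)$.

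Concretely, I would start with a projective presentation
$$P_1 \xrightarrow{g} P_0 \to X \to 0,$$
so that $\Tr(X) = \operatorname{coker}(g^{*})$ by definition. A map $\phi : P_0 \to A$ factors through $X$ if and only if $\phi \circ g = 0$, i.e.\ $g^{*}(\phi) = 0$, so $X^{*}$ is naturally identified with $\ker(g^{*})$. Combining this with the definition of $\Tr(X)$ produces the four-term exact sequence of right $A^{op}$-modules
$$0 \to X^{*} \to P_0^{*} \xrightarrow{g^{*}} P_1^{*} \to \Tr(X) \to 0.$$
Since $(-)^{*}$ sends finitely generated projective right $A$-modules to finitely generated projective right $A^{op}$-modules, both $P_0^{*}$ and $P_1^{*}$ are projective.

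To extract syzygies, I would split this four-term sequence into the two short exact sequences
$$0 \to \operatorname{im}(g^{*}) \to P_1^{*} \to \Tr(X) \to 0 \qquad \text{and} \qquad 0 \to X^{*} \to P_0^{*} \to \operatorname{im}(g^{*}) \to 0.$$
The first identifies $\operatorname{im}(g^{*})$ as a first syzygy of $\Tr(X)$, giving $\operatorname{im}(g^{*}) \cong \Omega\Tr(X)$ in $\underline{\mod}\text{-}A^{op}$, and then the second identifies $X^{*}$ as a first syzygy of $\operatorname{im}(g^{*})$, giving the desired $X^{*} \cong \Omega^{2}\Tr(X)$ in the stable category.

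The only point requiring care is the usual caveat that $\Omega$ is well-defined only up to projective summands, so the isomorphism should be read in $\underline{\mod}\text{-}A^{op}$ (which is the natural setting, as $\Tr$ itself is a functor on the stable category by Theorem~\ref{ABtheorem}); the assumption that $X$ is indecomposable non-projective ensures $X^{*}$ is the expected representative rather than an artifact of projective summands. There is no serious obstacle here, as essentially everything is forced by the definition.
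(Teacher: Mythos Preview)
Your proposal is correct and follows essentially the same route as the paper: apply $(-)^{*}$ to a projective presentation of $X$, obtain the four-term exact sequence with projective middle terms, and read off $X^{*}$ as a second syzygy of $\Tr(X)$. The only cosmetic difference is that the paper starts from a \emph{minimal} projective presentation (so that the dualized presentation of $\Tr(X)$ is again minimal, using that $X$ has no projective summands) and thereby obtains the isomorphism on the nose, whereas you work with an arbitrary presentation and phrase the conclusion in the stable category; both readings are adequate for how the lemma is used later.
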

\begin{proof}
Let 
$$P_1 \rightarrow P_0 \rightarrow X \rightarrow 0$$ 
be a minimal projective presentation of $X$ and apply the functor $(-)^{*}$ to it. Since for a projective module $P$ also $P^{*}$ is projective, we obtain by the definition of the Auslander-Bridger transpose the following exact sequence, where $(P_0)^{*}$ and $(P_1)^{*}$ are projective and give a minimal projective presentation of $\Tr(X)$:
$$X^{*} \rightarrow (P_0)^{*} \rightarrow (P_1)^{*} \rightarrow \Tr(X) \rightarrow 0.$$
Thus $X^{*} \cong \Omega^2 \Tr(X)$, since we assumed that $X$ as no projective summands.
\end{proof}

\begin{corollary} \label{newreflexivechar}

Let $A$ be an isolated singularity with an indecomposable non-projective module $X$.
Then $X$ is reflexive if and only if $\Omega^2Tr\Omega^2Tr(X) \cong X$.

\end{corollary}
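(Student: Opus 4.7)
The plan is to apply Lemma~\ref{Nakfunctor} twice in order to identify $\Omega^{2}\Tr\,\Omega^{2}\Tr(X)$ with $X^{**}$, and then invoke the equivalence (1)$\Leftrightarrow$(2) of Proposition~\ref{reflexivecharacterisation}.

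First, Lemma~\ref{Nakfunctor} applied directly to the indecomposable non-projective module $X$ yields $\Omega^{2}\Tr(X)\cong X^{*}$. Since the argument of that lemma realises $X^{*}$ as a second syzygy in the minimal projective resolution of $\Tr(X)$, the module $X^{*}$ has no projective direct summands.

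Next, I would apply the same lemma to $X^{*}$ to obtain $\Omega^{2}\Tr(X^{*})\cong (X^{*})^{*}=X^{**}$. Although the lemma is stated for indecomposable modules, its proof only uses that the input has no projective summands, which is ensured by the previous step. Composing the two identifications gives a module isomorphism $\Omega^{2}\Tr\,\Omega^{2}\Tr(X)\cong X^{**}$, so the conclusion reduces to showing that $X\cong X^{**}$ if and only if $X$ is reflexive, which is exactly the equivalence (1)$\Leftrightarrow$(2) of Proposition~\ref{reflexivecharacterisation}.

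The step I expect to require the most care is the iteration: one has to verify that $X^{*}$ has no projective summands before reapplying Lemma~\ref{Nakfunctor}, and that the resulting identification $\Omega^{2}\Tr\,\Omega^{2}\Tr(X)\cong X^{**}$ is a genuine module isomorphism (not merely a stable one), so that Proposition~\ref{reflexivecharacterisation}(2) applies on the nose. Both points are handled by the observation that a minimal second syzygy carries no projective summands, so that each application of $\Omega^{2}\Tr$ lands in a module without projective part and the isomorphisms assemble at the level of $\mod\text{-}A$ rather than only in $\underline{\mod}\text{-}A$.
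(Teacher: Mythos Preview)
Your proof is correct and follows essentially the same route as the paper: apply Lemma~\ref{Nakfunctor} twice to identify $\Omega^{2}\Tr\,\Omega^{2}\Tr(X)$ with $X^{**}$, then invoke Proposition~\ref{reflexivecharacterisation}. Your treatment is in fact slightly cleaner, since you justify the absence of projective summands in $X^{*}$ uniformly via the minimal-syzygy property (valid over the semiperfect ring $A$), whereas the paper argues this separately in each direction.
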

\begin{proof}
By \ref{reflexivecharacterisation}, $X$ is reflexive if and only if $X \cong X^{**}$. 
Assume first that $X$ is indecomposable non-projective and reflexive.
Note that $X^{*}$ can not be projective, or else also $X^{**} \cong X$ would be projective, which contradicts our assumptions on $X$. 

But by \ref{Nakfunctor}, we have $X^{*} \cong \Omega^2 Tr(X)$ and then $X^{**} \cong \Omega^2 Tr \Omega^2 Tr(X)$.
Now assume that $X$ is indecomposable non-projective with $\Omega^2Tr\Omega^2Tr(X) \cong X$. Then also $\Omega^2(\Tr(X))$ is non-projective and $\Omega^2Tr\Omega^2Tr(X) \cong X^{**} $ forces $X \cong X^{**}$ and thus $X$ is reflexive.

\end{proof}

We call an isolated singularity $A$ of Krull dimension $d$ \emph{$l$-periodic} in case the functor $\Omega^l : \underline{\CM-}(A) \rightarrow \underline{\CM-}(A)$ is isomorphic to the identity functor.

\begin{lemma} \label{helplemma}
Let $A$ be a $2d$-periodic isolated singularity with Krull dimension $d$ and $X \in \CM(A)$ indecomposable non-projective.
Then $\Tr(\Omega^d(X)) \cong \Omega^d(\Tr(X))$ in the stable category.

\end{lemma}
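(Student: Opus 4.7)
The plan is to start from the duality $(\Omega^d \Tr)^2 \cong \mathrm{id}_{\underline{\CM}(A)}$ of Theorem \ref{isosingduality}, unwind it, and then use $2d$-periodicity together with $\Tr^2 \cong \mathrm{id}$ (Theorem \ref{ABtheorem}) to peel off the outer $\Omega^d$ and $\Tr$. Explicitly, applied to $X$ the duality reads
\[
\Omega^d\bigl(\Tr(\Omega^d(\Tr(X)))\bigr) \cong X \quad \text{in } \underline{\CM}(A).
\]

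Since $\Omega^d$ is a functor on the stable categories, I would apply it to both sides of this isomorphism to get
\[
\Omega^{2d}\bigl(\Tr(\Omega^d(\Tr(X)))\bigr) \cong \Omega^d(X).
\]
The $2d$-periodicity assumption gives $\Omega^{2d} \cong \mathrm{id}$, so the left-hand side collapses to $\Tr(\Omega^d(\Tr(X)))$, yielding
\[
\Tr\bigl(\Omega^d(\Tr(X))\bigr) \cong \Omega^d(X)
\]
in $\underline{\CM}(A)$. Now applying the contravariant functor $\Tr$ to both sides and using $\Tr \circ \Tr \cong \mathrm{id}$ (Theorem \ref{ABtheorem}) produces
\[
\Omega^d(\Tr(X)) \cong \Tr(\Omega^d(X)),
\]
which is the desired isomorphism in the stable category.

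I expect no real obstacle here beyond bookkeeping: the main point is to be careful that in $(\Omega^d\Tr)^2$ the inner and outer copies of $\Omega^d$ live on opposite sides of the duality (on $\underline{\CM}(A^{op})$ and $\underline{\CM}(A)$ respectively), so one should check that $2d$-periodicity on the $A$-side is what is used when applying $\Omega^d$ on the outside. The functoriality of $\Omega^d$ and the duality properties of $\Tr$ then give the result essentially formally.
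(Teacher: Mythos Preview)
Your argument uses exactly the same three ingredients as the paper---the identity $(\Omega^d\Tr)^2\cong\mathrm{id}$, the periodicity $\Omega^{2d}\cong\mathrm{id}$, and $\Tr^2\cong\mathrm{id}$---just composed in the reverse order: the paper starts from $\Omega^{2d}(X)\cong X$, inserts $\Tr\Tr$, and then applies $\Omega^d\Tr$ to invoke the duality identity, whereas you start from the duality identity and then invoke periodicity. So the strategy is essentially the same.

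There is, however, one bookkeeping point you should not wave away. The hypothesis ``$2d$-periodic'' means $\Omega^{2d}\cong\mathrm{id}$ \emph{on $\underline{\CM}(A)$}, not on all of $\underline{\mod}\,A$. When you pass from $\Omega^{2d}\bigl(\Tr\Omega^d\Tr(X)\bigr)\cong\Omega^d(X)$ to $\Tr\Omega^d\Tr(X)\cong\Omega^d(X)$, you are tacitly assuming $\Tr\Omega^d\Tr(X)\in\CM(A)$, and $\Tr$ of a Cohen--Macaulay module need not be Cohen--Macaulay. The paper faces the mirror-image issue (it needs $\Tr\Omega^d(X)\in\CM(A^{op})$ before applying $(\Omega^d\Tr)^2\cong\mathrm{id}$) and deals with it explicitly: if $\Omega^d\Tr(Y)\in\CM$ then $Y\cong\Omega^d\Tr(\Omega^d\Tr(Y))\in\CM$. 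The same device works for you: set $Y=\Tr\Omega^d\Tr(X)$, compute $\Omega^d\Tr(Y)\cong\Omega^{2d}\Tr(X)$, observe that high enough syzygies over a $d$-dimensional order are Cohen--Macaulay, and conclude $Y\in\CM(A)$. With that line added your proof is complete.
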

\begin{proof}
We use that $\Omega^d Tr \Omega^d Tr \cong id$ in the stable category by \ref{isosingduality}.
We have for every $X \in \CM(A)$:
$\Omega^{2d}(X) \cong X$. This is equivalent to
$\Omega^d \Tr \Tr \Omega^d(X) \cong X$. 
Note that when for $Y \in \mod-A$: $\Omega^d \Tr(Y)=Z \in \CM(A^{op})$, then also $Y = \Omega^d \Tr(Z) \in \CM(A)$.
This shows that $Y=\Tr \Omega^d(X) \in \CM(A)$. 

We now apply the functor $\Omega^d \Tr$ to $\Omega^d \Tr \Tr \Omega^d(X) \cong X$ and use that $(\Omega^d \Tr)^2 \cong id$ to obtain:
$$(\Omega^d \Tr \Omega^d \Tr )\Tr \Omega^d(X) \cong \Omega^d \Tr(X)$$
which is gives 
$$\Tr \Omega^d(X) \cong \Omega^d \Tr(X).$$
\end{proof}

\begin{theorem} \label{maintheoremorder}
Let $A$ be a $2d-periodic$ isolated singularity of Krull dimension $d$ and $X \in \CM(A)$ indecomposable non-projective.
Then the following are equivalent:
\begin{enumerate}
\item $\Ext_A^{d+1}(X,A)=\Ext_A^{d+2}(X,A)=0$.
\item $\Tr_d(X)$ is reflexive.

\item $\tau(X) \cong \nu \Omega^{d+2}(X)$.
\end{enumerate}

\end{theorem}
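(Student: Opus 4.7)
The plan is to establish (1) $\Leftrightarrow$ (2) and (2) $\Leftrightarrow$ (3) separately, using Proposition \ref{reflexivecharacterisation} as the reflexivity criterion applied to the module $\Tr_d(X)$, and using Lemma \ref{Nakfunctor} to convert between $(-)^{*}$ and $\Omega^{2}\Tr$. The pivotal preliminary identification, which drives both halves, is the stable isomorphism
\[
\Tr(\Tr_d(X)) \;\cong\; \Omega^d(X).
\]
This I would derive directly from Theorem \ref{isosingduality}: since $\Tr_d\Tr_d \cong \operatorname{id}$ in $\underline{\CM}(A)$ and $\Tr_d = \Omega^d\Tr$, one has $\Omega^d\Tr(\Tr_d(X)) \cong X$, and inverting $\Omega^d$ using $2d$-periodicity yields the claim.

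For (1) $\Leftrightarrow$ (2), I would apply Proposition \ref{reflexivecharacterisation} ((1) $\Leftrightarrow$ (3)) to $\Tr_d(X)$: it asserts that $\Tr_d(X)$ is reflexive iff $\Ext^{i}(\Tr(\Tr_d(X)),A)=0$ for $i=1,2$. Substituting the pivotal isomorphism and using the standard dimension shift $\Ext^{i}(\Omega^d X, A) \cong \Ext^{i+d}(X,A)$ (valid because, working with a minimal projective resolution, $\Omega^d X$ has no projective summands), this condition becomes exactly $\Ext^{d+1}(X,A) = \Ext^{d+2}(X,A) = 0$.

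For (2) $\Leftrightarrow$ (3), I would first unwind the definitions: $\tau(X) = D_d\Tr_d(X)$ and $\nu\Omega^{d+2}(X) = D_d((\Omega^{d+2}X)^{*})$, so since $D_d$ is a duality on $\CM$, statement (3) is equivalent to the module isomorphism $\Tr_d(X) \cong (\Omega^{d+2}X)^{*}$. Next, Lemma \ref{Nakfunctor} applied to $\Tr_d(X)$, together with the pivotal isomorphism, gives
\[
(\Tr_d(X))^{*} \;\cong\; \Omega^{2}\Tr(\Tr_d(X)) \;\cong\; \Omega^{d+2}(X),
\]
so $(\Tr_d(X))^{**} \cong (\Omega^{d+2}X)^{*}$. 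Finally Proposition \ref{reflexivecharacterisation} ((1) $\Leftrightarrow$ (2)) says $\Tr_d(X)$ is reflexive iff $\Tr_d(X) \cong (\Tr_d(X))^{**}$, and combining this with the previous identification gives (2) $\Leftrightarrow$ (3). Equivalently, this step is a direct application of Corollary \ref{newreflexivechar} to $\Tr_d(X)$, whose iterated $\Omega^{2}\Tr$ is computed by the displayed formula.

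The main subtlety I expect is the usual book-keeping between stable and honest module isomorphisms: Lemmas \ref{Nakfunctor} and \ref{helplemma} (and Theorem \ref{isosingduality}) only yield stable statements, whereas conditions (2) and (3) of the theorem concern modules. I would resolve this by taking $\Tr_d(X)$, $\Omega^d(X)$ and $\Omega^{d+2}(X)$ to arise from minimal projective resolutions (hence free of projective summands), observing that $(-)^{*}$ and $D_d$ preserve $\CM$ and behave well on indecomposable non-projective input, and checking that $\Tr_d(X)$ is itself indecomposable non-projective (via the duality of Theorem \ref{isosingduality}), so that Lemma \ref{Nakfunctor} genuinely applies to it.
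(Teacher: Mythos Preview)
Your proposal is correct and follows essentially the same route as the paper: your ``pivotal isomorphism'' $\Tr(\Tr_d(X))\cong\Omega^d(X)$ is exactly a repackaging of Lemma~\ref{helplemma} (derived the same way, from $\Tr_d^2\cong\mathrm{id}$ and $2d$-periodicity), and both halves of your argument then match the paper's computations, just organized slightly more directly. Your closing paragraph on stable versus honest isomorphisms makes explicit a point the paper leaves implicit.
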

\begin{proof}
We have 
$$\Ext_A^{d+1}(X,A)=\Ext_A^{d+2}(X,A)=0 \iff $$
$$\Ext_A^{1}(\Omega^d(X),A)=\Ext_A^{2}(\Omega^d(X),A)=0 \iff $$
$$\Ext_A^{1}(\Tr(\Tr(\Omega^d(X))),A)=\Ext_A^{2}(\Tr(\Tr(\Omega^d(X))),A)=0 \iff $$
$$\Ext_A^{1}(\Tr(\Omega^d(\Tr(X))),A)=\Ext_A^{2}(\Tr(\Omega^d(\Tr(X))),A)=0 \iff $$
$$\Ext_A^{1}(\Tr(\Tr_d(X)),A)=\Ext_A^{2}(\Tr(\Tr_d(X)),A)=0 \iff $$
$\Tr_d(X)$ is reflexive, by our characterision (3) of reflexive modules in \ref{reflexivecharacterisation}. \newline
Here we used that $\Ext_A^{i}(\Tr(\Tr(\Omega^d(X))),A) \cong \Ext_A^{i}(\Tr(\Omega^d(\Tr(X))),A)$ for $i=1,2$ in the stable category since we have $\Tr(\Omega^d(X)) \cong \Omega^d(\Tr(X))$ in the stable category by the previous lemma \ref{helplemma}.
This gives the equivalence between (1) and (2). \newline
Now assume that $\tau(X) \cong \nu \Omega^{2+d}(X)$.
Applying the duality $D_d$ on both sides and using that $(-)^{*} \cong \Omega^2 \Tr(-)$, by \ref{Nakfunctor} this is equivalent to 
$\Tr_d(X) \cong \Omega^2 \Tr \Omega^{2+d}(X)$.
We simplify this using our assumption that $\Tr(\Omega^d(X)) \cong \Omega^d(\Tr(X))$ and $(-)^{**} \cong \Omega^2 \Tr \Omega^2 \Tr$ in the stable category:
$$\Omega^2 \Tr \Omega^{2+d}(X) \cong \Omega^2 \Tr \Omega^2 \Tr \Tr \Omega^d(X) $$
$$ \cong ( \Tr \Omega^d(X))^{**} \cong (\Omega^d \Tr(X))^{**} \cong (\Tr_d(X))^{**}.$$
Thus the isomorphism $\tau(X) \cong \nu \Omega^{2+d}(X)$ is equivalent to $\Tr_d(X) \cong (\Tr_d(X))^{**}$, which is equivalent to $\Tr_d(X)$ being reflexive by (2) of \ref{reflexivecharacterisation}.
\end{proof}

\begin{example}
Let $d \geq 1$.
Let $S=K[[x_0,...,x_d]]$ the formal power series ring in $(d+1)$-variables and $f \in (x_0,...,x_d)$. Set $A=S/(f)$ and assume that $A$ is an isolated singularity.
In this case $A$ is local (and thus semiperfect) and has Krull dimension $d$. $A$ is called an \emph{isolated hypersurface singularity}. We refer to \cite{LW} and \cite{Y} for more information on such algebras and several classification results.
According to theorem 4.9. of \cite{Iya}, we have $\Omega^2 \cong id$ in $\underline{\CM-}(A)$ and thus $A$ is $2d$-periodic and our main result \ref{maintheoremorder} applies to such algebras to give $\tau(M) \cong \nu(M)$ in case $d$ is even and $\tau(M) \cong \nu \Omega^1(M)$ in case $d$ is odd.

\end{example}

\section{Applications for Artin algebras}
We restrict in this section to Artin algebras (that is to the case of Krull dimension $d=0$). We assume that the reader is familiar with the basic representation theory of Artin algebras and refer for example to \cite{ARS}.
We denote the duality of an Artin algebra by $D$. We call a module $M$ \emph{$\tau$-perfect} in case $\tau(M) \cong \nu \Omega^2(M)$ and \emph{$\tau^{-1}$-perfect} in case $\tau^{-1}(M) \cong \nu^{-1} \Omega^{-2}(M)$. $\Per_{\tau}(A)$ is defined as the full subcategory of $\tau$-perfect modules and $\Per_{\tau^{-1}}(A)$ is defined as the full subcategory of $\tau^{-1}$-perfect modules. 
We denote by $\Ref(A)$ the full subcategory of reflexive modules.
Furthermore, we call a module $N$ \emph{coreflexive} in case $D(N)$ is reflexive and we define the subcategory $\Coref(A)$ as the full subcategory of coreflexive $A$-modules.
We state the case $d=0$ of our main result here explicitly due to its importance:
\begin{theorem} \label{maintheorem}
Let $A$ be an Artin algebra with an indecomposable non-projective module $X$.
Then the following are equivalent:
\begin{enumerate}
\item $\Ext_A^{1}(X,A)=\Ext_A^{2}(X,A)=0$.
\item $\Tr(X)$ is reflexive.

\item $X$ is $\tau$-perfect, that is $\tau(X) \cong \nu \Omega^{2}(X)$.
\end{enumerate}

\end{theorem}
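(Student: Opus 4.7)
The plan is to obtain \ref{maintheorem} as the special case $d=0$ of the already established \ref{maintheoremorder}. As noted in the paragraph preceding \ref{isosingduality}, isolated singularities of Krull dimension $0$ are exactly the Artin algebras, and in this setting $\CM(A)=\mod\text{-}A$, so the hypothesis $X\in\CM(A)$ imposes no restriction on $X$. The condition of being ``$2d$-periodic'' specialises to $\Omega^0\cong\mathrm{id}$, which is trivially satisfied. Hence every Artin algebra automatically fulfils the hypotheses of the main theorem.

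Next one verifies that the auxiliary functors reduce to their familiar Artin algebra counterparts: for $d=0$ the duality $D_d=\Hom_R(-,R)$ coincides with the standard Artin algebra duality $D$, and $\Tr_d=\Omega^0\Tr=\Tr$. Consequently $\tau=D_d\Tr_d=D\Tr$ and $\nu=D_d(-)^{*}=D(-)^{*}$ match the usual definitions. With these identifications, the three conditions of \ref{maintheoremorder} specialise respectively to $\Ext_A^{1}(X,A)=\Ext_A^{2}(X,A)=0$, to $\Tr(X)$ being reflexive, and to $\tau(X)\cong\nu\Omega^{2}(X)$, which are exactly (1), (2), (3) of the present statement.

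The proof therefore consists of directly invoking \ref{maintheoremorder} with $d=0$; there is no additional content to establish. The only thing worth mentioning, rather than a real obstacle, is the trivial verification that the specialisations of $D_d$, $\Tr_d$, $\tau$ and $\nu$ to the case $d=0$ do recover the standard Artin algebra notions used in $\tau$-perfectness.
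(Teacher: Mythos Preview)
Your proposal is correct and matches the paper's approach exactly: the paper presents \ref{maintheorem} explicitly as the $d=0$ specialisation of \ref{maintheoremorder} (introduced with ``We state the case $d=0$ of our main result here explicitly due to its importance'') and provides no separate proof. Your verification that the $2d$-periodicity hypothesis becomes vacuous and that $D_d$, $\Tr_d$, $\tau$, $\nu$ specialise to the standard Artin algebra notions is precisely the content that makes this specialisation legitimate.
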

We also state the dual of the previous theorem:.

\begin{theorem} \label{dualtheorem}
Let $A$ be an Artin algebra with an indecomposable non-injective module $X$.
Then the following are equivalent:
\begin{enumerate}
\item $\Ext_A^1(D(A),X)=\Ext_A^2(D(A),X)=0$.
\item $\Tr(D(X))$ is reflexive.

\item $X$ is $\tau^{-1}$-perfect, that is $\tau(X) \cong \nu^{-1} \Omega^{-2}(X)$.
\end{enumerate}

\end{theorem}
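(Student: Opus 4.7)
The plan is to deduce the theorem from Theorem \ref{maintheorem} by dualising: apply it to the indecomposable non-projective $A^{op}$-module $Y := D(X)$ and translate each of the three conditions back along $D$.

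First, since $D : \mod-A \to \mod-A^{op}$ is a duality exchanging indecomposable non-injectives on the $A$-side with indecomposable non-projectives on the $A^{op}$-side, $Y = D(X)$ is indecomposable non-projective over $A^{op}$, so Theorem \ref{maintheorem} applies and tells us that the three conditions
\begin{enumerate}
\item[(1$'$)] $\Ext_{A^{op}}^{1}(Y,A^{op}) = \Ext_{A^{op}}^{2}(Y,A^{op}) = 0$,
\item[(2$'$)] $\Tr(Y)$ is reflexive,
\item[(3$'$)] $\tau_{A^{op}}(Y) \cong \nu_{A^{op}}\,\Omega^{2}_{A^{op}}(Y)$
\end{enumerate}
are equivalent.

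Next I would match these to conditions (1), (2), (3) of the theorem one by one. Condition (2$'$) is literally (2), because the Auslander-Bridger transpose $\Tr(D(X))$ is an $A$-module on both sides. For (1$'$) $\Leftrightarrow$ (1) I would use the standard duality isomorphism
\[
\Ext_{A^{op}}^{i}(D(X), A^{op}) \;\cong\; \Ext_A^{i}(D(A), X),
\]
which follows from $A^{op} \cong D(D(A))$ as a right $A^{op}$-module and the adjunction between $D : \mod-A \to \mod-A^{op}$ and its inverse.

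The main (and only) nontrivial book-keeping step will be identifying (3$'$) with (3). For this I would use the three standard intertwining identities in the stable category: $\tau_{A^{op}} \circ D \cong D \circ \tau^{-1}$, $\Omega_{A^{op}} \circ D \cong D \circ \Omega^{-1}$, and $\nu_{A^{op}} \circ D \cong D \circ \nu^{-1}$. The first is just the definition $\tau^{-1} = \Tr\,D$ rewritten; the second holds because $D$ swaps projective and injective resolutions; the third reduces, via $A^{op} \cong D(D(A))$ and the Hom-$\otimes$/Hom-$D$ adjunction, to $\Hom_{A^{op}}(D(M),A^{op}) \cong \Hom_A(D(A),M) = \nu^{-1}(M)$. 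Combining these three gives
\[
D\bigl(\tau^{-1}(X)\bigr) \;\cong\; \tau_{A^{op}}(D(X)), \qquad
D\bigl(\nu^{-1}\Omega^{-2}(X)\bigr) \;\cong\; \nu_{A^{op}}\Omega^{2}_{A^{op}}(D(X)),
\]
so that (3$'$) is obtained from (3) by applying the duality $D$, and conversely. Hence (1) $\Leftrightarrow$ (2) $\Leftrightarrow$ (3).
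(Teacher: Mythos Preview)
Your proposal is correct and is exactly the approach the paper takes: the paper presents Theorem~\ref{dualtheorem} simply as ``the dual of the previous theorem'' and gives no separate proof, so your argument is just an explicit unpacking of that dualisation via $Y=D(X)$ over $A^{op}$. (Note also that condition~(3) in the statement contains a typo---it should read $\tau^{-1}(X)\cong\nu^{-1}\Omega^{-2}(X)$, consistent with the definition of $\tau^{-1}$-perfect; your proof implicitly uses the corrected version.)
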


In selfinjective Artin algebras it is in fact true that for every module $M$ with $\tau(M) \cong \nu \Omega^2(M)$, we even have $\tau(M) \cong \nu \Omega^2(M) \cong \Omega^2 \nu(M)$. But this fails in general as we show in the next example:
\begin{example}
Let $A$ be the Nakayama algebra, given by quiver and relations, with Kupisch series $[2,2,2,1]$ with simple modules numbered from 0 to 3.
Then the simple module $S_0$ satisfies $\tau(S_0) \cong \nu \Omega^2(S_0) \cong S_1$, but $\Omega^2 \nu(S_0) \cong 0$.
\end{example}

The next proposition shows that the classification of $\tau^{-1}$-perfect modules is essentially equivalent to the classification of reflexive modules for general Artin algebras.
\begin{proposition} \label{secondmaintheorem}
Let $A$ be an Artin algebra.
\begin{enumerate}
\item $\tau$ induces an equivalence of categories between $\underline{\Per_{\tau}}(A)$ and $\overline{\Coref}(A)$.
\item $\tau^{-1}$ induces an equivalence of categories between $\overline{\Per_{\tau^{-1}}}(A)$ and $\underline{\Ref}(A)$.
\end{enumerate}
\end{proposition}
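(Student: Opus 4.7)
\medskip

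\noindent\textbf{Proof proposal.} My plan is to reduce the proposition to the two main characterisation theorems of this section by invoking the classical fact that $\tau = D\Tr$ and $\tau^{-1} = \Tr D$ are already quasi-inverse equivalences between the stable category $\underline{\mod}\, A$ and the costable category $\overline{\mod}\, A$ of $A$. Once this is taken for granted, proving the two equivalences amounts to showing that $\tau$ and $\tau^{-1}$ restrict correctly to the prescribed subcategories, and this will follow from Theorems \ref{maintheorem} and \ref{dualtheorem} together with the Auslander-Bridger duality $\Tr\Tr \cong \mathrm{id}$ from Theorem \ref{ABtheorem}.

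For (1), I would first verify that $\tau$ sends $\underline{\Per_\tau}(A)$ into $\overline{\Coref}(A)$. If $X$ is indecomposable non-projective with $\tau(X) \cong \nu\Omega^2(X)$, then by Theorem \ref{maintheorem} the module $\Tr(X)$ is reflexive; since $\tau(X) \cong D\Tr(X)$, applying the duality $D$ yields $D(\tau(X)) \cong \Tr(X)$ reflexive, so $\tau(X)$ is coreflexive by definition. For essential surjectivity I would start with an indecomposable non-injective coreflexive $N$, set $M := \tau^{-1}(N) \cong \Tr D(N)$, and use $\Tr\Tr \cong \mathrm{id}$ to compute $\Tr(M) \cong D(N)$, which is reflexive by hypothesis. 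Theorem \ref{maintheorem} then gives $M \in \Per_\tau(A)$, and $\tau(M) \cong N$ in the costable category. Since $\tau$ is already fully faithful between $\underline{\mod}\, A$ and $\overline{\mod}\, A$, its restriction to these subcategories is automatically an equivalence.

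Part (2) is completely dual and uses Theorem \ref{dualtheorem} in place of Theorem \ref{maintheorem}. A $\tau^{-1}$-perfect indecomposable non-injective $M$ has $\tau^{-1}(M) \cong \Tr D(M)$ reflexive by Theorem \ref{dualtheorem}; conversely, given an indecomposable non-projective reflexive $N$, the module $\tau(N) = D\Tr(N)$ satisfies $\Tr D(\tau(N)) \cong \Tr\Tr(N) \cong N$, which is reflexive, so Theorem \ref{dualtheorem} places $\tau(N)$ in $\Per_{\tau^{-1}}(A)$, and $\tau^{-1}(\tau(N)) \cong N$ in the stable category.

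I do not anticipate a serious obstacle here: the two main theorems of this section carry out all the homological work, and what remains is a bookkeeping exercise in the interplay of the dualities $D$ and $\Tr$. The only point needing attention is that $\Per_\tau(A)$ and $\Coref(A)$ may contain projective and injective summands, which vanish in the stable and costable quotients; thus the argument need only be executed on indecomposable non-projective, respectively non-injective, representatives, which is exactly the setting in which Theorems \ref{maintheorem} and \ref{dualtheorem} apply.
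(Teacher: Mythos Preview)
Your proposal is correct and follows essentially the same approach as the paper: both restrict the known equivalence $\tau : \underline{\mod}\,A \to \overline{\mod}\,A$ to the subcategory $\underline{\Per_\tau}(A)$, use Theorem~\ref{maintheorem} to show the image lands in $\overline{\Coref}(A)$, and check density by applying $\tau^{-1}$ to a coreflexive object and invoking Theorem~\ref{maintheorem} again. The only cosmetic difference is that you verify $\Tr(\tau^{-1}(N))$ is reflexive via $\Tr\Tr \cong \mathrm{id}$, whereas the paper phrases the same step as ``$D\Tr(\tau^{-1}(Y)) = \tau(\tau^{-1}(Y)) = Y$ is coreflexive''.
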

\begin{proof}
We only prove (1), since the proof of (2) is dual.
$\tau$ is an equivalence between $\underline{\mod}-A$ and $\overline{\mod}-A$.
Thus when we restricted to the subcategory $\underline{\Per_{\tau}}(A)$, $\tau$ induces an equivalence to the image $\tau(\underline{\Per_{\tau}}(A))$.
By \ref{maintheorem}, $X \in \Per_{\tau}(A)$ implies that $\Tr(X)$ is reflexive and thus $\tau(X)=DTr(X)$ is coreflexive. Thus the image is contained in $\overline{\Coref}(A)$. Now $\tau: \underline{\Per_{\tau}}(A) \rightarrow \overline{\Coref}(A)$ is also dense since when $Y \in \overline{\Coref}(A)$ is given, we have that $\tau^{-1}(Y)$ is $\tau$-perfect. This is true since by \ref{maintheorem}, $\tau^{-1}(Y)$ is $\tau$-perfect if and only if $\Tr(\tau^{-1}(Y))$ is reflexive, which is equivalent to $DTr(\tau^{-1}(Y))=\tau ( \tau^{-1}(Y))=Y$ being coreflexive. 

\end{proof}

We can also obtain a new characterisation of selfinjective Artin algebras from our main theorem:
\begin{corollary}
Let $A$ be an Artin algebra. Then the following are equivalent:
\begin{enumerate}
\item $A$ is selfinjective.
\item Every simple module is $\tau$-perfect.
\end{enumerate}
\end{corollary}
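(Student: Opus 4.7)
The plan is to handle the two implications separately, leveraging Theorem \ref{maintheorem} for the harder direction.

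For the forward direction $(1)\Rightarrow(2)$, I would invoke the classical result cited in the Introduction: over a selfinjective Artin algebra every indecomposable non-projective module $M$ satisfies $\tau(M)\cong \nu\Omega^2(M)$. For a simple module that happens to be projective, both sides vanish ($\tau(S)=0$ and $\Omega^2(S)=0$), so the isomorphism holds trivially and such $S$ is still $\tau$-perfect. Thus every simple module is $\tau$-perfect.

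For the converse $(2)\Rightarrow(1)$, the plan is to show that assuming every simple is $\tau$-perfect forces $\Ext_A^1(M,A)=0$ for every module $M$, which is exactly the selfinjectivity of $A$. First, for each simple $S$: if $S$ is projective, then $\Ext_A^1(S,A)=0$ automatically; if $S$ is non-projective, then Theorem \ref{maintheorem} translates the $\tau$-perfect hypothesis into $\Ext_A^1(S,A)=\Ext_A^2(S,A)=0$. Hence $\Ext_A^1(S,A)=0$ for every simple $A$-module $S$.

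To conclude, I would perform an induction on composition length: any finitely generated $A$-module $N$ fits into a short exact sequence with a simple quotient and a submodule of strictly smaller length, and the long exact sequence in $\Ext$ shows $\Ext_A^1(N,A)=0$ provided the two outer terms vanish. Iterating, $\Ext_A^1(-,A)$ vanishes on all of $\mod\text{-}A$, so $A$ is injective as a right module over itself, i.e. selfinjective. The main (minor) subtlety is remembering that Theorem \ref{maintheorem} is only stated for indecomposable non-projective modules, so the projective simples must be treated separately; this causes no trouble since for projective modules the $\Ext^1$-vanishing against $A$ is automatic.
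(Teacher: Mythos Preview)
Your argument is correct. The forward direction matches the paper's, and your explicit treatment of projective simples is a nice touch (the paper glosses over this). For $(2)\Rightarrow(1)$ you and the paper both first extract $\Ext_A^1(S,A)=0$ for all simples from Theorem~\ref{maintheorem}, but then diverge: you run an induction on composition length to propagate $\Ext_A^1(-,A)=0$ to all finitely generated modules, whereas the paper invokes the isomorphism $\Hom_A(S,\Omega^{-1}(A))\cong \Ext_A^1(S,A)$ (from \cite{Ben}) to conclude that $\Omega^{-1}(A)$ has zero socle and is therefore zero, so $A$ coincides with its injective envelope. Your route is more self-contained and avoids the external citation; the paper's route is a bit slicker in that it pinpoints directly why $A$ is injective without an induction. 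Both are standard and equally short.
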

\begin{proof}
Since $\tau \cong \nu \Omega^2$ in a selfinjective Artin algebra, it is clear that (1) implies (2).
Now assume that (2) holds. Then we have $\Ext_A^1(S,A)=0$ for every simple modules $S$ by \ref{maintheorem}.
But by corollary 2.5.4. of \cite{Ben}, we have $\Hom_A(S,\Omega^{-1}(A)) \cong \Ext_A^1(S,A)=0$ for every simple module $S$. Thus the socle of $\Omega^{-1}(A)$ is zero and thus also $\Omega^{-1}(A)$ is zero. This means that $A$ is isomorphic to its injective envelope. But this shows that every projective module is injective and thus $A$ is selfinjective.

\end{proof}

We give two applications of this result to algebras with dominant dimension at least 2 and algebras that are 2-Iwanaga-Gorenstein.
Recall that the \emph{dominant dimension} of a module $M$ with minimal injective coresolution $(I_i)$ is defined as the smallest number $k$ such that $I_k$ is not projective. The dominant dimension of an algebra is by definition the dominant dimension of the regular module.
It is well known that an algebra $A$ has dominant dimension at least one if and only if there is a minimal faithful projective-injective left $A$-module of the form $Af$ for some idempotent $f$. Furthermore, such an algebra has dominant dimension at least two if and only if $A \cong End_{fAf}(Af)$ in addition.
\begin{corollary}
Let $A$ be an algebra with a minimal faithful projective-injective left module $Af$ with dominant dimension at least two. 
There is an equivalence of categories 
$$\mod-fAf/(\add(Af)) \rightarrow \overline{\Per_{\tau^{-1}}(A)}.$$

\end{corollary}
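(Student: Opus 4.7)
My plan is to combine proposition \ref{secondmaintheorem}(2) with the classical Morita-Tachikawa-Auslander correspondence, which under the hypothesis of the corollary identifies reflexive right $A$-modules with all right $fAf$-modules. Indeed, by proposition \ref{secondmaintheorem}(2) the functor $\tau^{-1}$ induces an equivalence $\overline{\Per_{\tau^{-1}}(A)} \xrightarrow{\sim} \underline{\Ref}(A)$, so it suffices to construct an equivalence $\mod-fAf/\add(Af) \xrightarrow{\sim} \underline{\Ref}(A)$ and then to compose with the quasi-inverse $\tau$.

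To this end I would invoke the following classical result. Since $\domdim({}_AA) \geq 2$ with $Af$ the minimal faithful projective-injective left $A$-module (so that $A \cong \End_{fAf}(Af)$), the functor $(-)f \cong \Hom_A(fA,-)\colon \mod-A \to \mod-fAf$ restricts to an equivalence $\Ref(A) \xrightarrow{\sim} \mod-fAf$, with quasi-inverse given by $N \mapsto \Hom_{fAf}(Af,N)$ regarded as a right $A$-module via the left $A$-action on $Af$. Under this equivalence the regular right module $A$ corresponds to $Af$, so every projective right $A$-module $eA$ goes to $eAf \in \add(Af)$; conversely any summand of $(Af)^n = A^n f$ lifts through the quasi-inverse to a summand of the projective $A^n$. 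Hence $\add(A)$ and $\add(Af)$ correspond, and the equivalence descends to
$$\underline{\Ref}(A) = \Ref(A)/\add(A) \xrightarrow{\sim} \mod-fAf/\add(Af).$$
Composing the inverse of this equivalence with $\tau$ (the quasi-inverse of the equivalence from proposition \ref{secondmaintheorem}(2)) produces the claimed functor $\mod-fAf/\add(Af) \to \overline{\Per_{\tau^{-1}}(A)}$.

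The main obstacle is the Morita-Tachikawa-Auslander equivalence $\Ref(A) \simeq \mod-fAf$ itself; once it is available, the remaining argument is a routine check that projective right $A$-modules correspond precisely to $\add(Af)$ under the correspondence, and that this identification passes to the quotient categories.
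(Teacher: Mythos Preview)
Your proposal is correct and follows essentially the same route as the paper: reduce to $\underline{\Ref}(A)$ via Proposition~\ref{secondmaintheorem}(2), then identify $\underline{\Ref}(A)$ with $\mod\text{-}fAf/\add(Af)$ using the classical correspondence under the $\domdim\geq 2$ hypothesis. The only cosmetic difference is that the paper factors your ``Morita--Tachikawa--Auslander equivalence'' into two cited steps---Morita's theorem that reflexive $=$ dominant dimension $\geq 2$, and the Auslander--Platzeck--Todorov equivalence $\Dom_2\simeq \mod\text{-}fAf$ sending projectives to $\add(Af)$---rather than invoking it as a single package.
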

\begin{proof}
Since $A$ has dominant dimension at least two and by a result of Morita in \cite{M} theorem 3.3., an $A$-module $X$ has dominant dimension at least two if and only if $X$ is reflexive.
Now the full subcategory of modules with dominant dimension at least two $\Dom_2$
is equivalent to $\mod-fAf$ and the projective modules are sent to the modules in $\add-Af$ (this is a special case of lemma 3.1. in \cite{APT}) . This means that there is an equivalence $\underline{Ref}(A)=\underline{Dom_2} \cong \mod-fAf/(\add(Af))$. But by \ref{secondmaintheorem}, $\underline{Ref}(A)$ is equivalent to $\overline{\Per_{\tau^{-1}}(A)}$, which finishes the proof.
\end{proof}

\begin{example}
Let $K$ be an infinite field and $V$ an $n$-dimensional vector space.
Let $A$ be the Schur algebra $S(n,r)$ for $n \geq r$. Then $A$ has dominant dimenion at least two and $fAf$ is isomorphic to the symmetric group algebra $KS_r$ and $Af$ to $V^{\otimes n}$, see for example theorem 1.2. in \cite{KSX}.
The previous corollary gives us in this case an equivalence between the module category of the group algebra of the symmetric group modulo the subcategory $\add(V^{\otimes n})$ to the 
costable category of $\tau^{-1}$-perfect modules over the Schur algebra.
\end{example}

Recall that an Artin algebra $A$ is called \emph{$n$-Iwanaga-Gorenstein} in case the left and right injective dimensions of the regular modules $A$ are equal to $n$ for a natural number $n$.
An $A$-module $X$ over an $n$-Iwanga-Gorenstein algebra is called \emph{Gorenstein projective} in case $\Ext_A^i(X,A)=0$ for all $i \geq 1$. All projective modules are Gorenstein projective by definition and one of the main goals of Gorenstein homological algebra is to find a classification of all Gorenstein projective modules for a given Artin algebra.
In the next corollary we give a new characterisation of Gorenstein projective modules for 2-Iwanaga-Gorenstein algebras. Note that this is a large class of algebras that contain for example all cluster-tilted algebras by \cite{KR} and all algebras considered by \cite{GLS}.
\begin{corollary}
Let $A$ be a 2-Iwanaga-Gorenstein algebra.
Then an indecomposable module $X$ is Gorenstein projective if and only if $X$ is $\tau$-perfect, that is $\tau(X) \cong \nu \Omega^2(X)$.

\end{corollary}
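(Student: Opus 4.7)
The plan is to reduce the Gorenstein projective condition to the two-term Ext vanishing in Theorem \ref{maintheorem} using the hypothesis that $\idim(A_A) \leq 2$. By definition, an indecomposable module $X$ is Gorenstein projective precisely when $\Ext_A^i(X,A) = 0$ for all $i \geq 1$. So the task is to show that, for a $2$-Iwanaga-Gorenstein algebra, this infinite family of vanishings collapses to just $i = 1, 2$, at which point Theorem \ref{maintheorem} finishes the job by translating the Ext vanishing into $\tau$-perfection.

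First I would dispose of the easy direction: if $X$ is Gorenstein projective, then a fortiori $\Ext_A^1(X,A) = \Ext_A^2(X,A) = 0$, and Theorem \ref{maintheorem} immediately gives $\tau(X) \cong \nu \Omega^2(X)$, so $X$ is $\tau$-perfect. This uses neither the $2$-Iwanaga-Gorenstein hypothesis nor indecomposability beyond what Theorem \ref{maintheorem} already assumes.

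For the converse, assume $X$ is indecomposable and $\tau$-perfect. Applying Theorem \ref{maintheorem} gives $\Ext_A^1(X,A) = \Ext_A^2(X,A) = 0$. Since $A$ is $2$-Iwanaga-Gorenstein, the regular right module $A$ admits a finite injective resolution of length at most $2$, so the functor $\Ext_A^i(-,A)$ vanishes identically on $\mod\text{-}A$ for every $i \geq 3$. Combining the two ranges yields $\Ext_A^i(X,A) = 0$ for all $i \geq 1$, which is precisely the defining condition for $X$ to be Gorenstein projective.

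The argument is essentially a bookkeeping assembly of Theorem \ref{maintheorem} with the finite injective dimension of $A_A$, so there is no real obstacle. The only small point worth flagging explicitly is that the implication \emph{$\tau$-perfect $\Rightarrow$ Gorenstein projective} genuinely uses $n = 2$ and not a larger value of $n$: for an $n$-Iwanaga-Gorenstein algebra with $n \geq 3$, the vanishing of $\Ext_A^1$ and $\Ext_A^2$ alone is no longer enough to force $\Ext_A^i(X,A) = 0$ for $3 \leq i \leq n$, so the corollary is specific to the $2$-Iwanaga-Gorenstein setting stated.
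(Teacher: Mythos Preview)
Your proof is correct and follows essentially the same approach as the paper's own proof: both reduce the Gorenstein projective condition to the vanishing of $\Ext_A^1(X,A)$ and $\Ext_A^2(X,A)$ using $\idim(A_A)\leq 2$, and then invoke Theorem \ref{maintheorem}. The paper compresses the two directions into a single biconditional, whereas you spell them out separately and add a useful remark on why $n=2$ is essential, but the logical content is the same.
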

\begin{proof}
Since $A$ has injective dimension at most 2, we have that for an arbitrary module $X$: $\Ext_A^i(X,A)=0$ for all $i \geq 1$ if and only if $\Ext_A^1(X,A)=\Ext_A^2(X,A)=0$. But this condition is by \ref{maintheorem} equivalent to $X$ being $\tau$-perfect.

\end{proof}

\begin{example}
Let $A$ be the Nakayama algebra with Kupisch series [3,3,4] and primitive idempotents $e_0, e_1$ and $e_2$. $A$ is 2-Iwanaga-Gorenstein.
The indecomposable non-projective Gorenstein projective $A$-modules are $e_0 A /e_0 J^1$ and $e_1 A/e_1 J^2$.

\end{example}


\begin{thebibliography}{Gus}


\bibitem[AB]{AB} Auslander, M.; Bridger, M.: {\it Stable Module Theory} Memoirs of the American Mathematical Society, No. 94 American Mathematical Society, Providence, R.I. 1969 146 pp. 
\bibitem[APT]{APT} Auslander, M.; Platzeck, M. I.; Todorov, G.: {\it Homological theory of idempotent Ideals} Transactions of the American Mathematical Society, Volume 332, Number 2 , August 1992.
\bibitem[ARS]{ARS} Auslander, M.; Reiten, I.; Smalo, S.: {\it Representation Theory of Artin Algebras} Cambridge Studies in Advanced Mathematics, 36. Cambridge University Press, Cambridge, 1997. xiv+425 pp.

\bibitem[Ben]{Ben} Benson, D. J.: {\it Representations and cohomology I: Basic representation theory of finite groups and associative algebras.} Cambridge Studies in Advanced Mathematics, Volume 30, Cambridge University Press, 1991.


\bibitem[C]{C} Christensen, L. W.: {\it Gorenstein dimensions.} Lecture Notes in Mathematics. 1747. Berlin: Springer. viii, 204 p. (2000).

\bibitem[DP]{DP} Diveris, K.; Purin, M.: {\it Vanishing of self-extensions over symmetric algebras.} Journal of Pure and Applied Algebra 218 (2014) 962-971.

\bibitem[GLS]{GLS} Geiss, C.; Leclerc, B.; Schroer, J.: {\it  Quivers with relations for symmetrizable Cartan matrices I: Foundations.}  Inventiones Mathematicae 209 (2017), 61-158.

\bibitem[Iya]{Iya} Iyama, O.: {\it Auslander-Reiten theory revisited.} Trends in representation theory of algebras and related topics. Proceedings of the 12th international conference on representations of algebras and workshop (ICRA XII), EMS Series of Congress Reports, 349-397 (2008). 
\bibitem[KR]{KR} Keller, B.; Reiten, I.: {\it Cluster-tilted algebras are Gorenstein and stably Calabi-Yau.} Advances in Mathematics Volume 211, Issue 1, 1 May 2007, Pages 123-151.
\bibitem[KZ]{KZ} Kerner, O.; Zacharia, D.: {\it Auslander-Reiten theory for modules of finite complexity over self-injective algebras.} Bull. London Math. Soc. 43 (2011) 44-56.
\bibitem[KSX]{KSX} Koenig, S.; Slungard, I. H.; Xi, C.: {\it Double Centralizer Properties, Dominant Dimension, and Tilting Modules.} Journal of Algebra
Volume 240, Issue 1, 1 June 2001, Pages 393-412.
\bibitem[LW]{LW} Leuschke, G. J.; Wiegand, R.: {\it Cohen-Macaulay Representations.} Mathematical Surveys and Monographs, Volume 181, 2012.
\bibitem[M]{M} Morita, K.: {\it Duality in QF-3 rings.} Mathematische Zeitschrift 108(1969),  237-252.
\bibitem[RZ]{RZ} Ringel, C. M.; Zhang, P. : {\it Gorenstein-projective and semi-Gorenstein-projective modules.} https://arxiv.org/abs/1808.01809.


\bibitem[Y]{Y} Yoshino, Y.: {\it Maximal Cohen-Macaulay Modules over Cohen-Macaulay Rings.} London Mathematical Society Lecture Note Series Book 146, (1990).

\end{thebibliography}
\end{document}